\newcommand{\Rmnum}[1]{\expandafter\@slowromancap\romannumeral #1@}
\newtheorem{thm}{Theorem}
\newtheorem{lem}{Lemma}
\newtheorem{cor}{Corollary}
\newtheorem{claim}{Claim}
\theoremstyle{definition}
\begin{document}

	\title{\bf A spectral radius condition for a graph to have  $(a,b)$-parity factors}
	\author{{Junjie Wang$^{a}$, Yang Yu$^{a,}$\footnote{Corresponding author.}\setcounter{footnote}{-1}\footnote{\emph{E-mail address:} understand8106@163.com}, Jianbiao Hu$^{a}$}, Peng Wen$^{a}$\\[2mm]
		\small $^{a}$School of Mathematics, East China University of Science and Technology,\\
		\small Shanghai 200237, China }

	\date{}
	\maketitle
	{\flushleft\large\bf Abstract }  Let $a,b$ be two positive integers such that $a \le b$ and $a \equiv b$ (mod $2$). We say that a graph $G$ has an $(a,b)$-parity factor if $G$ has a spanning subgraph $F$ such that $d_{F}(v) \equiv b$ (mod $2$) and $a \le d_{F}(v) \le  b$ for all $v \in V (G)$. In this paper, we provide a tight spectral radius condition for a graph to have  $(a,b)$-parity factors. 
	
	\begin{flushleft}
		\textbf{Keywords:} Spectral radius; $[a,b]$-factor; $(a,b)$-parity factor.
	\end{flushleft}

	\section{Introduction}
	All graphs considered in this paper are simple and undirected. Let $G$ be a graph with vertex set $V(G)$ and  edge set $E(G)$, and let $e(G)=|E(G)|$ denote the size of $G$.  For any $v\in V(G)$, let $d_G(v)$ denote the degree of $v$ in $G$, and let $\delta(G)=\min_{v\in V(G)}d_G(v)$. For any $S \subseteq V(G)$, we denote by $G[S]$ the subgraph of $G$ induced by $S$, and write $E_G(S)=E(G[S])$ and $e_G(S)=|E_G(S)|$. Also, we denote by $E_G(S,T)$ the set of edges between $S$ and $T$ for any two disjoint subsets $S$ and $T$ of $V(G)$, and write $e_G(S,T)=|E_G(S,T)|$. The \textit{adjacency matrix} of $G$ is defined as $A(G)=(a_{u,v})_{u,v\in V(G)}$, where $a_{u,v}=1$ if $u$ and $v$ are adjacent in $G$, and $a_{u,v}=0$ otherwise. The eigenvalues of $A(G)$ are called the \textit{eigenvalues} of $G$, and denoted by $\lambda_1(G)\geq \cdots\geq \lambda_n(G)$, where $n=|V(G)|$. In particular,  the \textit{spectral radius} of $G$ is defined by $\rho (G):=\lambda_1(G)$. For some basic results on the spectral radius of graphs, we refer the reader to \cite{Ho, St, WXH}, and references therein.

In the past half century, the theory of graph factors played a key role in the study of graph theory \cite{AK,Co,KT,Lo1,LZ,T,TT}. Let $g$ and $f$ be two integer-valued functions defined on $V(G)$ such that $0 \le g(v) \le f(v)$ for all $v \in V (G)$. A \textit{$(g,f)$-factor} of $G$ is a spanning subgraph $F$ of $G$ satisfying $g(v) \le d_{F}(v) \le f(v)$ for all $v\in V(G)$. In particular, an $(f,f)$-factor is called an \textit{$f$-factor}. 
	Let $a\leq b$ be two positive integers. If $g\equiv a$ and $f\equiv b$, then a $(g,f)$-factor is also called an $[a,b]$-\textit{factor}.
	
In 1952, Tutte \cite{T} gave a necessary and sufficient condition for the existence of an $f$-factor in a graph, which is known as Tutte's $f$-factor theorem. In 1970, Lov\'{a}sz \cite{Lo1} generalized  the conclusion of Tutte's $f$-factor theorem to $(g,f)$-factors.

	\begin{thm}(Lov\'{a}sz\cite{Lo1})\label{thm::1}
		A graph $G$ has a $(g, f)$-factor if and only if for any two disjoint subsets $S$, $T$ of $V(G)$,
		$$
		f(S)-g(T)+\sum_{x\in T}d_{G-S}(x)-\hat{q}_{G}(S,T) \ge 0,
		$$
		where $\hat{q}_{G}(S,T)$ denotes the number of components $C$ in $G-S-T$ such that $g(v)=f(v)$ for all $v \in V(C)$ and $f(V(C))+e_{G}(V(C), T) \equiv 1 \pmod 2$.
	\end{thm}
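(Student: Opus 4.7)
The plan is to prove Lovász's characterization by separately addressing necessity and sufficiency, with the latter obtained via a reduction to Tutte's $f$-factor theorem.

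For necessity, assume $G$ has a $(g,f)$-factor $F$, fix disjoint subsets $S, T\subseteq V(G)$, and set $R=V(G)\setminus(S\cup T)$. Using $f(S)\ge d_F(S)$, $g(T)\le d_F(T)$, together with the edge-identities $d_F(S)=2e_F(S)+e_F(S,T)+e_F(S,R)$ and $d_F(T)=2e_F(T)+e_F(S,T)+e_F(T,R)$, a short manipulation yields
$$
f(S)-g(T)+\sum_{x\in T}d_{G-S}(x)\ \ge\ e_F(S,R)+\bigl(e_G(T,R)-e_F(T,R)\bigr).
$$
To show the right-hand side is at least $\hat{q}_G(S,T)$, fix any component $C$ of $G-S-T$ with $g\equiv f$ on $V(C)$. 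Then $\sum_{v\in V(C)}d_F(v)=f(V(C))$, so $e_F(V(C),S)+e_F(V(C),T)\equiv f(V(C))\pmod 2$; if moreover $f(V(C))+e_G(V(C),T)$ is odd, the quantity $e_F(V(C),S)+(e_G(V(C),T)-e_F(V(C),T))$ is odd and therefore $\ge 1$. Summing over the $\hat{q}_G(S,T)$ critical components (which are vertex-disjoint and lie in $R$) gives the required lower bound.

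For sufficiency, I would reduce to Tutte's $f$-factor theorem via an auxiliary graph $G^*$: for each $v\in V(G)$ attach $f(v)-g(v)$ pendant helper vertices to $v$, and set $f^*(v)=f(v)$ for $v\in V(G)$ and $f^*(u)=1$ for every helper $u$. A direct verification shows that $(g,f)$-factors of $G$ correspond bijectively (after extension by the forced pendant edges) to $f^*$-factors of $G^*$. Assuming Lovász's inequality on $G$, I would verify Tutte's $f$-factor condition for $G^*$ on an arbitrary disjoint pair $(S^*, T^*)$: writing $S=S^*\cap V(G)$ and $T=T^*\cap V(G)$, and performing a case analysis on where the helpers of each $v$ are placed (in $S^*$, in $T^*$, or outside), one shows that the most restrictive choice corresponds exactly to Lovász's inequality for $(S, T)$ in $G$, after matching up the critical components on both sides.

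The main obstacle is the bookkeeping in the sufficiency reduction: one must verify that the critical components of $G^*-S^*-T^*$ are accounted for by $\hat{q}_G(S,T)$, without spurious contributions from isolated helper vertices appearing as singleton components. The cleanest strategy is to replace $(S^*, T^*)$ by a modified pair achieving at most the same Tutte defect while placing every helper optimally relative to its parent, after which the inequality reduces, term by term, to $f(S)-g(T)+\sum_{x\in T}d_{G-S}(x)-\hat{q}_G(S,T)\ge 0$.
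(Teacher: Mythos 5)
The paper states this theorem as a cited classical result of Lov\'asz and gives no proof, so your attempt can only be judged on its own terms. Your necessity argument is correct and complete: the identity $f(S)-g(T)+\sum_{x\in T}d_{G-S}(x)\ge e_F(S,R)+(e_G(T,R)-e_F(T,R))$ follows from the degree/edge counts as you state, and the parity argument showing each critical component contributes at least $1$ to the right-hand side (using that the components are disjoint subsets of $R$ with no $G$-edges between them) is sound.

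The sufficiency direction, however, has a genuine gap, and it begins with the gadget itself. In your $G^*$, every helper $u$ attached to $v$ has $f^*(u)=1$ and $v$ as its only neighbour, so every pendant edge $uv$ is forced into any $f^*$-factor; then $d_{F^*}(v)=f(v)$ forces exactly $f(v)-\bigl(f(v)-g(v)\bigr)=g(v)$ factor-edges at $v$ inside $G$. Hence $f^*$-factors of $G^*$ correspond to $g$-factors of $G$, not to $(g,f)$-factors, and the claimed bijection is false. (If instead you let the helpers of $v$ pair up among themselves, you get precisely the $(g,f)$-\emph{parity}-factor problem, which is again not what is wanted; the correct Tutte-style gadget for interval-constrained degrees is more elaborate.) Beyond this, the part you defer as ``bookkeeping'' is actually the entire content of the theorem: one must explain why, after the reduction, only components on which $g\equiv f$ can be critical (components containing a vertex with $g(v)<f(v)$ acquire slack from the helper structure), and one must show that an \emph{optimal} choice of $(S^*,T^*)$ in $G^*$ projects to a pair $(S,T)$ in $G$ with matching defect. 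Without a correct gadget and this verification, sufficiency is not established; an alternative is to prove the theorem directly (as Lov\'asz does) by a deficiency/alternating-path argument rather than by reduction.
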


	Based on Theorem \ref{thm::1}, a series of sufficient conditions in terms of vertex degrees or eigenvalues have been obtained for the existence of a $(g,f)$-factor (or  particularly, $[a,b]$-factor) in  graphs \cite{A, CHO, FLL, OS2, WZ, Xi}. 
	
	Suppose that $g(v) \equiv f(v) \pmod{2}$ for all $v \in V(G)$. We say that $G$ has a \textit{$(g,f)$-parity factor} if it has a spanning subgraph $F$ such that $d_{F} (v) \equiv f(v)$ (mod $2$) and $g(v) \le d_{F} (v) \le f(v)$ for all $v \in V (G)$. In particular, if $g(v)=a$ and $f(v)=b$ for all $v \in V(G)$, where $a,b$ are positive integers such that $a \le b$ and $a \equiv b~(\mod 2)$, then a $(g, f)$-parity factor is called a $(a,b)$-\textit{parity factor}. In \cite{Lo}, Lov\'{a}sz provided a characterization for graphs having $(g, f)$-parity factors.

	\begin{thm}(Lov\'{a}sz \cite{Lo})\label{thm::3}
		 A graph $G$ has a $(g, f)$-parity factor if and only if for any two disjoint subsets $S$, $T$ of $V(G)$,
		\begin{equation}
			\eta (S,T) = f(S)-g(T)+\sum_{x \in T} d_{G-S}(x)-q(S,T) \ge 0,
			\nonumber
		\end{equation}
		where $q(S,T)$ denotes the number of components $C$ in $G-S-T$ such that $g(V(C))+e_{G}(V(C),T) \equiv 1 \pmod 2$.
	\end{thm}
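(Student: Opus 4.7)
My plan is to treat the two directions separately: necessity is a direct parity-and-handshaking calculation, while sufficiency reduces (with care) to Lov\'asz's $(g,f)$-factor theorem (Theorem~\ref{thm::1}).

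\emph{Necessity.} Suppose $F$ is a $(g,f)$-parity factor of $G$, fix disjoint $S,T\subseteq V(G)$, and set $U=V(G)\setminus(S\cup T)$. Using $d_F(v)\le f(v)$ on $S$ and $d_F(v)\ge g(v)$ on $T$ and expanding via handshakes, I first get
\[
f(S)-g(T)\;\ge\;2e_F(S)-2e_F(T)+e_F(S,U)-e_F(T,U).
\]
Since $\sum_{x\in T}d_{G-S}(x)=2e_G(T)+e_G(T,U)$, substituting yields
\[
\eta(S,T)\;\ge\;2e_F(S)+2\bigl(e_G(T)-e_F(T)\bigr)+e_F(S,U)+\bigl(e_G(T,U)-e_F(T,U)\bigr)-q(S,T).
\]
It then suffices to show $e_F(S,U)+(e_G(T,U)-e_F(T,U))\ge q(S,T)$. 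For each component $C$ of $G-S-T$, the parity assumption $d_F(v)\equiv f(v)\equiv g(v)\pmod2$ forces $e_F(V(C),S)+e_F(V(C),T)\equiv g(V(C))\pmod 2$; if $C$ is counted by $q(S,T)$ then $g(V(C))+e_G(V(C),T)$ is odd, so $e_F(V(C),S)+(e_G(V(C),T)-e_F(V(C),T))$ is odd and hence $\ge 1$. Summing over the bad components gives the desired inequality, since the corresponding sums over \emph{all} components of $G-S-T$ equal $e_F(S,U)$ and $e_G(T,U)-e_F(T,U)$ respectively. Therefore $\eta(S,T)\ge0$.

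\emph{Sufficiency and main obstacle.} For the converse, my plan is to build an auxiliary graph $G^{\ast}$ by attaching to every vertex $v$ a small parity gadget (for example, a pair of new vertices adjacent to $v$, replicated $(f(v)-g(v))/2$ times) together with functions $g^{\ast},f^{\ast}$ that are equal on the gadget vertices and chosen so that a $(g^{\ast},f^{\ast})$-factor of $G^{\ast}$ restricts to a $(g,f)$-parity factor of $G$, and conversely every $(g,f)$-parity factor of $G$ extends to a $(g^{\ast},f^{\ast})$-factor of $G^{\ast}$. One must then verify that the hypothesis $\eta(S,T)\ge0$ for all disjoint $S,T$ in $G$ implies the Lov\'asz inequality of Theorem~\ref{thm::1} for all disjoint $S^{\ast},T^{\ast}$ in $G^{\ast}$; the theorem then supplies the required factor. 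The main obstacle is precisely the translation from $\hat q_{G^{\ast}}(S^{\ast},T^{\ast})$ back to $q(S,T)$: Theorem~\ref{thm::1} counts only components $C$ with $g^{\ast}\equiv f^{\ast}$ on $V(C)$, whereas $q(S,T)$ ranges over \emph{all} components of $G-S-T$ whose $g(V(C))+e_G(V(C),T)$ is odd. One therefore has to choose the gadget so that in any worst-case $(S^{\ast},T^{\ast})$ the gadget vertices can be pushed into a canonical configuration whose bad components correspond bijectively to those counted by $q(S,T)$. An alternative bypassing the gadget is to induct on $\sum_v(f(v)-g(v))$, with base case $f\equiv g$ handled directly by Theorem~\ref{thm::1} (since then $q(S,T)=\hat q_G(S,T)$) and inductive step decreasing $f(v_0)$ by $2$ at some $v_0$ with $f(v_0)>g(v_0)$; the technical core is then to convert a violating pair for the reduced instance into a violating pair for the original.
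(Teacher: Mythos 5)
The paper does not prove Theorem~\ref{thm::3}; it is quoted verbatim from Lov\'asz's 1972 paper, so there is no in-paper argument to compare against. Judged on its own terms, your necessity direction is complete and correct: the handshake expansion of $f(S)-g(T)$, the cancellation of $e_F(S,T)$, and the parity observation that a component $C$ counted by $q(S,T)$ must satisfy $e_F(V(C),S)+\bigl(e_G(V(C),T)-e_F(V(C),T)\bigr)\ge 1$ (because that quantity is a nonnegative integer congruent to $g(V(C))+e_G(V(C),T)\pmod 2$) all check out, and summing over the bad components does dominate $q(S,T)$ as you claim.

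The sufficiency direction, however, is not a proof but a proposal with a gap you yourself flag and do not close. Two concrete problems. First, the gadget as described --- ``a pair of new vertices adjacent to $v$'' with $g^{\ast}=f^{\ast}$ on them --- does not give $v$ any parity freedom: if the two new vertices are adjacent only to $v$ and are forced to exact degree, they each must use their edge to $v$, so the gadget always contributes the same amount to $d(v)$ and the restriction to $G$ cannot realize all residues $f(v),f(v)-2,\dots,g(v)$. The standard fix is to add the edge between the pair (a triangle gadget with $f^{\ast}\equiv 1$ on the two new vertices), so that the gadget contributes $0$ or $2$ to $d(v)$. Second, and more seriously, the entire content of the reduction lies in the step you defer: showing that $\eta(S,T)\ge 0$ in $G$ implies the Theorem~\ref{thm::1} inequality for every pair $(S^{\ast},T^{\ast})$ in $G^{\ast}$. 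This requires arguing that a worst-case $(S^{\ast},T^{\ast})$ may be assumed to place gadget vertices in a canonical position, computing how each gadget shifts $f^{\ast}(S^{\ast})-g^{\ast}(T^{\ast})+\sum_{x\in T^{\ast}}d_{G^{\ast}-S^{\ast}}(x)$, and matching $\hat q_{G^{\ast}}(S^{\ast},T^{\ast})$ with $q(S,T)$ --- in particular checking that no new odd components arise from the gadgets themselves. None of this is carried out, and the alternative induction on $\sum_v(f(v)-g(v))$ is likewise only named, with its ``technical core'' left open. As it stands, you have proved one implication of an if-and-only-if statement.
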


    As an immediate corollary, we have the following result.
    \begin{cor}\label{cor::1}
    	Let $a,b,n$ be three positive integers such that $a\leq b$, $a\equiv b\pmod 2$, and $na$ is even. Let $G$ be a graph of order $n$. Then $G$ has an $(a,b)$-parity factor if and only if for any two disjoint subsets $S$, $T$ of $V(G)$,
    	\begin{equation}\label{equ::1}
    		\eta (S,T) = b|S|-a|T|+\sum_{x \in T} d_{G-S}(x)-q(S,T) \ge 0,
    	\end{equation}
    	where $q(S,T)$ denotes the number of components $C$ in $G-S-T$ such that $a|V(C)|+e_{G}(V(C),T) \equiv 1 \pmod 2$.
    \end{cor}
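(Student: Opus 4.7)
The plan is to derive Corollary \ref{cor::1} as a direct specialization of Theorem \ref{thm::3}. I would take the integer-valued functions $g$ and $f$ on $V(G)$ to be the constants $g(v)=a$ and $f(v)=b$ for every $v\in V(G)$. Because the hypothesis $a\equiv b\pmod 2$ yields $g(v)\equiv f(v)\pmod 2$ on all of $V(G)$, a $(g,f)$-parity factor for this choice is, by definition, precisely an $(a,b)$-parity factor, so Theorem \ref{thm::3} is applicable with no further adjustment.

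Next I would substitute the constants into the quantities appearing in Theorem \ref{thm::3}. For any $S,T\subseteq V(G)$ one has $f(S)=\sum_{v\in S}f(v)=b|S|$ and $g(T)=\sum_{v\in T}g(v)=a|T|$, while for any component $C$ of $G-S-T$ one has $g(V(C))=a|V(C)|$. After this substitution, the inequality $\eta(S,T)\ge 0$ from Theorem \ref{thm::3} becomes exactly inequality (\ref{equ::1}), and the components counted by $q(S,T)$ become exactly those $C$ with $a|V(C)|+e_G(V(C),T)\equiv 1\pmod 2$. This delivers both directions of the corollary simultaneously.

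The last item to dispatch is the role of the hypothesis that $na$ is even. This assumption is not needed to apply Theorem \ref{thm::3}, but I would note that it is a necessary condition for the existence of an $(a,b)$-parity factor at all: if $F$ is any such spanning subgraph, then $2e(F)=\sum_{v\in V(G)} d_F(v)\equiv nb\equiv na\pmod 2$, forcing $na$ to be even. Including it in the hypothesis therefore causes no loss of generality and keeps the statement meaningful. Since the argument is essentially a transcription of Theorem \ref{thm::3} with constant $g$ and $f$, I do not anticipate any obstacle beyond bookkeeping; the only point that requires a brief word is the parity compatibility noted in the first paragraph.
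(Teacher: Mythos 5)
Your proposal is correct and matches the paper's treatment: the paper states this as an immediate consequence of Theorem \ref{thm::3} obtained by taking $g\equiv a$ and $f\equiv b$, which is exactly your specialization, and your substitutions $f(S)=b|S|$, $g(T)=a|T|$, $g(V(C))=a|V(C)|$ are all that is needed. Your remark that $na$ even is a necessary condition for existence (via $2e(F)\equiv nb\equiv na\pmod 2$) is a correct and harmless addition.
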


 Using Corollary \ref{cor::1}, Liu and Lu \cite{LL} obtained a degree condition for a graph to have $(a,b)$-parity factors.

    \begin{thm}(Liu and Lu\cite{LL})
        Let $a$, $b$, $n$ be three positive integers such that $a \equiv b \pmod{2}$, $na$ is even and $n \ge b(a+b)(a+b+2)/(2a)$. Let $G$ be a connected graph of order $n$. If $\delta (G) \ge a+(b-a)/a$ and 
        $$
        \max \{ d_G(u), d_G(v) \} \ge \frac{an}{a+b}
        $$
        for any two non-adjacent vertices $u$ and $v$ in $G$, then $G$ has an $(a,b)$-parity factor.
    \end{thm}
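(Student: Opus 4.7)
The strategy is to argue by contradiction through Corollary~\ref{cor::1}. Suppose $G$ has no $(a,b)$-parity factor; then there exist disjoint $S,T\subseteq V(G)$ with $\eta(S,T) = b|S| - a|T| + \sum_{x\in T} d_{G-S}(x) - q(S,T) \le -1$. Since $a\equiv b\pmod 2$ and $na$ is even, a routine parity computation on $\eta(S,T)$ modulo $2$ shows that $\eta(S,T)$ has the same parity as $bn$, which is even, so in fact $\eta(S,T)\le -2$. I would then fix a counterexample pair $(S,T)$ that is extremal in a convenient sense, for instance minimising $|S|+|T|$ among all pairs with $\eta\le -2$.

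From the extremality, the usual shifting arguments (moving a vertex out of $T$, or absorbing an odd component of $G-S-T$ into $S$ and re-testing $\eta$) force $d_{G-S}(v)\le a-1$ for every $v\in T$, and force every component $C$ counted by $q(S,T)$ to be \emph{closed}, in the sense that every vertex of $C$ has all of its $G$-neighbours inside $S\cup T\cup V(C)$. The hypothesis $\delta(G)\ge a+(b-a)/a$ then yields the component-by-component lower bound $|S|+|N_T(C)|+|V(C)|-1\ge \delta(G)$, which in particular constrains how many singleton odd components can appear, and dually gives a useful lower bound on $|S|$ when $T\neq\emptyset$. The non-adjacency hypothesis enters at this stage: any two vertices drawn from distinct closed odd components are non-adjacent, so the max-degree assumption forces at least one of them to have degree $\ge an/(a+b)$, pinning $|S|$ (or $|S|+|V(C)|$) into a narrow window of width controlled by $b$. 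Summing the resulting bounds over all odd components, feeding them back into $\eta(S,T)\le -2$, and applying $n\ge b(a+b)(a+b+2)/(2a)$ should produce the desired contradiction.

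The main obstacle, I expect, will be the case analysis on the sizes of the odd components of $G-S-T$: isolated vertices, small components with $|V(C)|\le b/a$, and larger components typically behave differently, and each sub-case draws on a different combination of the three hypotheses. The assumption $n\ge b(a+b)(a+b+2)/(2a)$ is tight, so extracting precisely this constant rather than a weaker one will require carefully accounting for the parity condition $a|V(C)|+e_G(V(C),T)\equiv 1\pmod 2$ that decides which components actually contribute to $q(S,T)$, and likely a separate treatment of the degenerate situation $T=\emptyset$, in which all control over the partition has to come from $q(S,\emptyset)$ and the degree hypotheses alone.
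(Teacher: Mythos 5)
This theorem is quoted from Liu and Lu \cite{LL}; the present paper contains no proof of it, so your proposal can only be judged on its own merits. As written it is a plan rather than a proof: the correct general framework is there (argue by contradiction via Corollary \ref{cor::1}, use the parity of $\eta$ to upgrade $\eta\le -1$ to $\eta\le -2$ --- and indeed $\eta(S,T)\equiv bn\equiv an\equiv 0\pmod 2$ is a correct and standard computation), but every step that actually uses the three hypotheses is deferred with ``should produce the desired contradiction.'' The threshold $n\ge b(a+b)(a+b+2)/(2a)$ is exactly where the work lives, and nothing in the proposal derives it.

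Two specific assertions are also problematic. First, the claim that minimising $|S|+|T|$ forces $d_{G-S}(v)\le a-1$ for every $v\in T$ is not justified: when $v$ is deleted from $T$, all components of $G-S-T$ adjacent to $v$ merge with $v$ into a single component of $G-S-(T\setminus\{v\})$, so $q$ can drop by as much as the number of such components, and the comparison $\eta(S,T\setminus\{v\})\ge\eta(S,T)$ (or whatever extremality gives) only yields a bound of the form $d_{G-S}(v)\le a-1+(\text{number of }q\text{-components meeting }N(v))$, not $d_{G-S}(v)\le a-1$. Since controlling exactly these components is the crux of the argument, this is a real gap, not a cosmetic one. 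Second, the ``closedness'' you extract for the components counted by $q(S,T)$ --- that every vertex of $C$ has all its neighbours in $S\cup T\cup V(C)$ --- is true of \emph{every} component of $G-S-T$ by definition, so it is vacuous and gives no leverage; in particular the inequality $|S|+|N_T(C)|+|V(C)|-1\ge\delta(G)$ you draw from it needs its own justification (it requires a vertex of $C$ with no neighbour outside $S\cup N_T(C)\cup V(C)$, which is automatic, but then the bound is just $\delta(G)\le |S|+|T|+|V(C)|-1$ unless you separately control which vertices of $T$ see $C$). Finally, the cases $T=\emptyset$, $q\le 1$, and $G-S-T$ connected cannot be reached by the ``two vertices in distinct odd components are non-adjacent'' device at all, and you acknowledge but do not resolve them. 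To turn this into a proof you would need to carry out the component-size case analysis explicitly and exhibit the arithmetic that produces the stated bound on $n$.
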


In recent years, some  sufficient conditions in terms of the eigenvalues are also established for the existence of an $(a,b)$-parity factor in graphs. For positive integers $r\geq 3$ and even $n$, Lu, Wu, and Yang \cite{LWY}  proved a lower bound for $\lambda_3(G)$ in an $n$-vertex $r$-regular graph $G$ to guarantee the existence of
a $(1, b)$-parity factor in $G$, where $b$ is a positive odd integer.  In \cite{KO}, Kim, O, Park and Ree improved this bound, and proved the following result.

    \begin{thm}(Kim, O, Park and Ree\cite{KO})
        Let $k \ge 3$, and $b$ be a positive odd integer less than $k$. If $\lambda_{3} (G)$ of a $k$-regular graph $G$ with even number of vertices is smaller than $\rho(k,b)$, where 
        \begin{equation}
		\rho(k,b)=\left\{
		\begin{array}{ll}
			\frac{k-2+\sqrt{(k+2)^2-4(\lceil \frac{k}{b} \rceil-2)}}{2},&\mbox{if both $r$ and $\lceil \frac{k}{b} \rceil$ are even},\\
            \frac{k-2+\sqrt{(k+2)^2-4(\lceil \frac{k}{b} \rceil-1)}}{2},&\mbox{if $r$ is even and $\lceil \frac{k}{b} \rceil$ is odd},\\
			\frac{k-3+\sqrt{(k+3)^2-4(\lceil \frac{k}{b} \rceil-2)}}{2},&\mbox{if both $r$ and $\lceil \frac{k}{b} \rceil$ are odd},\\
            \frac{k-3+\sqrt{(k+3)^2-4(\lceil \frac{k}{b} \rceil-1)}}{2},&\mbox{if $r$ is odd and $\lceil \frac{k}{b} \rceil$ is even,}
		\end{array}
		\right.
        \nonumber
		\end{equation}
        then $G$ has a $(1, b)$-parity factor.
    \end{thm}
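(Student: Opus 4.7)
The plan is to prove the contrapositive: assume that $G$ is a $k$-regular graph on an even number of vertices with no $(1,b)$-parity factor, and deduce $\lambda_3(G) \ge \rho(k,b)$. By Corollary \ref{cor::1} applied with $a = 1$, there exist disjoint $S, T \subseteq V(G)$ with
\[
\eta(S,T) = b|S| - |T| + \sum_{x \in T} d_{G-S}(x) - q(S,T) \le -1.
\]
A short parity computation, using $g \equiv 1$, $f \equiv b$ (both odd), and $|V(G)|$ even, shows that $\eta(S,T)$ is always even, so in fact $\eta(S,T) \le -2$.

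I would then choose such a pair $(S,T)$ with $|S|+|T|$ minimum, and extract structural information through local exchange arguments. Checking that $\eta$ cannot be decreased by shifting a vertex between $S$, $T$, a component $C$ of $G-S-T$, or the remainder pins down the degrees of $T$-vertices into $S\cup T$ and forces each component counted by $q(S,T)$ to be an isolated vertex $u$ with $N_G(u) \subseteq S \cup T$. The parity condition $|V(C)| + e_G(V(C), T) \equiv 1 \pmod 2$ combined with $k$-regularity then forces $q(S,T)$ to have size of order $\lceil k/b \rceil \cdot |S|$, with the precise constant determined by which of the four parity cases for $(k, \lceil k/b \rceil)$ one is in.

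The technical heart is eigenvalue interlacing. Partition $V(G)$ as $\pi = \{S, T, U, R\}$, where $U$ is the set of singleton odd components and $R$ is the remainder, and let $B_\pi$ be the $4\times 4$ matrix whose $(i,j)$-entry is the average number of neighbors a vertex in part $i$ has in part $j$. By Haemers' interlacing one has $\lambda_3(G) \ge \lambda_3(B_\pi)$. Since $G$ is $k$-regular, every row-sum of $B_\pi$ equals $k$, and the remaining entries are controlled by the edge counts derived above; diagonalizing $B_\pi$ then recovers $\rho(k,b)$ as its third eigenvalue in the extremal configuration. The main obstacle is the parity bookkeeping: each of the four cases for $(k, \lceil k/b \rceil)$ calls for its own slightly different extremal $(S,T,U)$ leading to a distinct quotient matrix, and tightness of the bound must be witnessed separately by explicit $k$-regular constructions obtained by joining $\lceil k/b \rceil$ near-cliques through a cut of the matching parity.
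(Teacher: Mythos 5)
This theorem is quoted in the paper from Kim, O, Park and Ree \cite{KO} as background; the paper contains no proof of it, so your attempt can only be judged on its own merits. Your opening moves are fine (contrapositive, the Lov\'asz parity criterion with $a=1$, the parity observation that forces $\eta(S,T)\le -2$; in fact for $(1,b)$-parity factors with $b$ odd the criterion collapses to Amahashi's condition $o(G-S)\le b|S|$ for all $S$, which is the cleaner starting point). But there are two genuine gaps after that. First, your structural claim that each component counted by $q(S,T)$ is an isolated vertex with all neighbours in $S\cup T$ is wrong, and in fact incompatible with the rest of your argument: in a $k$-regular graph a singleton component sends exactly $k$ edges into $S$, whereas the counting $e(S,V\setminus S)\le k|S|$ spread over more than $b|S|$ odd components is precisely what produces components sending only about $\lceil k/b\rceil$ edges to $S$ --- these are large, nearly $k$-regular components (near-cliques in the extremal examples), not singletons. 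The quantity $\rho(k,b)$ is exactly a lower bound for the spectral radius of such an ``almost $k$-regular'' component, which is why it appears as a root of a quadratic.

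Second, the interlacing step does not do what you need. Quotient-matrix (Haemers) interlacing with the four parts $\{S,T,U,R\}$ does give $\lambda_3(G)\ge\lambda_3(B_\pi)$, but lumping all the odd components into a single part $U$ destroys the information that there are \emph{three or more disjoint} sparse-boundary components; the entries of $B_\pi$ are only averages, $\lambda_3$ is not monotone in them, and nothing forces $\lambda_3(B_\pi)\ge\rho(k,b)$. The tool that actually closes the argument is the elementary fact that if $H_1,H_2,H_3$ are pairwise vertex-disjoint induced subgraphs of $G$, then $\lambda_3(G)\ge\min_i\lambda_1(H_i)$. So the workable route is: from $o(G-S)\ge b|S|+2$ and $e(S,V\setminus S)\le k|S|$, a pigeonhole argument (with the parity of $k$ and of $\lceil k/b\rceil$ governing the four cases) produces at least three odd components each sending at most roughly $\lceil k/b\rceil$ edges to $S$; a separate lemma shows any such component has spectral radius at least $\rho(k,b)$; then the disjoint-subgraph interlacing finishes. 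Your proposal is missing both the pigeonhole step and that spectral lower-bound lemma, which are the heart of the proof.
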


    Motivated by the above works, in this paper, we provide a tight spectral radius condition for a graph to have an $(a,b)$-parity factor. The \textit{join} of two graphs $G_1$ and $G_2$, denoted by $G_1\nabla G_2$, is the graph obtained from the vertex-disjoint union $G_1\cup G_2$ by adding all possible edges between $G_1$ and $G_2$. In particular, we denote $H_{n,a}=K_{a-1}\nabla (K_1\cup K_{n-a})$, where $K_s$ is the complete graph of order $s$. Our main result is as follows.
\begin{thm}\label{thm::5}
    	Let $a,b,n$ be three positive integers such that $a\leq b$, $a\equiv b\pmod 2$, and $na$ is even. If $G$ is a graph of order $n$ with $n \ge \max \{2a(a+1)+b-3, (2a+2)(a+1) \}$ and $\rho (G)\geq \rho (H_{n,a})$, then $G$ has an $(a,b)$-parity factor unless $G\cong H_{n,a}$.
\end{thm}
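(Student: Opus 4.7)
The plan is to argue by contradiction. Assume $G\not\cong H_{n,a}$ has $\rho(G)\ge\rho(H_{n,a})$ but admits no $(a,b)$-parity factor. By Corollary~\ref{cor::1}, there exist disjoint $S,T\subseteq V(G)$ with
$$
\eta(S,T)=b|S|-a|T|+\sum_{x\in T}d_{G-S}(x)-q(S,T)\le -1.
$$
Because $a\equiv b\pmod 2$ and $na$ is even, a parity check on the right-hand side shows $\eta(S,T)$ shares the parity of $na$, so we may sharpen the bound to $\eta(S,T)\le -2$. Writing $s=|S|$, $t=|T|$, $q=q(S,T)$, I would fix the witness $(S,T)$ so that $|T|$ is minimum and, subject to that, $|S|$ is minimum; such a critical choice forces $d_{G-S}(x)\le a-1$ for every $x\in T$ and keeps the bad components of $G-S-T$ under tight combinatorial control.

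The next step is to translate this witness into a spectral upper bound. Since $G$ is a spanning subgraph of $K_{s}\nabla(G-S)$, we have $\rho(G)\le\rho(K_{s}\nabla(G-S))$. By completing each component of $G-S-T$ into a clique, merging $T$ together with the non-bad components into one large clique, and using monotonicity of the Perron root under edge addition, $G$ is contained in a graph of the form $K_{s}\nabla(K_{r_{1}}\cup\cdots\cup K_{r_{p}})$ with $\sum r_{i}=n-s$. For fixed $s$ and $\sum r_{i}$, the spectral radius is maximized when all but one $r_{i}$ equal $1$, yielding a comparison graph $H_{n,s,r}:=K_{s}\nabla(K_{r}\cup K_{n-s-r})$ whose Perron value is the dominant root of the characteristic polynomial of the $3\times 3$ quotient matrix
$$
M(s,r)=\begin{pmatrix}s-1 & r & n-s-r\\ s & r-1 & 0\\ s & 0 & n-s-r-1\end{pmatrix}.
$$
Note that $H_{n,a}=H_{n,a-1,1}$ corresponds to $(s,r)=(a-1,1)$, so the problem is reduced to an eigenvalue comparison between two such $3\times 3$ matrices.

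The remainder of the proof is a case analysis on $(s,t,q)$, showing in every case except $(a-1,0,1)$ that $\rho(G)<\rho(H_{n,a})$, contradicting the hypothesis. The natural regimes are: $s\le a-2$, where the parity-strengthened inequality together with the degree cap on $T$ forces $q$ or $t$ to be large, inflating the block count in the decomposition and lowering its spectral radius; $s=a-1$, where one must rule out configurations with $(t,q)\neq(0,1)$ and, within $(t,q)=(0,1)$, identify $H_{n,a}$ as the unique equality case; and $s\ge a$, dispatched by monotonicity of $\det(xI-M(s,r))$ in $s$ combined with the lower bound on $n$.

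The main obstacle will be the spectral comparison at the boundary $s=a-1$. Here one must evaluate the cubic characteristic polynomial of $M(s,r)$ at $\rho(H_{n,a})$ and establish a strict sign, forcing $\rho(H_{n,s,r})<\rho(H_{n,a})$ for every competing $(s,r)\neq(a-1,1)$. The dual lower bounds $n\ge 2a(a+1)+b-3$ and $n\ge(2a+2)(a+1)$ are calibrated precisely to make this sign analysis go through: without them, marginal configurations with slightly different block sizes can tie or exceed $\rho(H_{n,a})$. Tracking these tight algebraic inequalities—rather than the combinatorial scaffolding—is where the bulk of the technical work will concentrate.
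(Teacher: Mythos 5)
Your overall architecture (contradiction via Corollary~\ref{cor::1}, a case analysis on $(s,t,q)$, clique completions compared through equitable quotient matrices, and Hong-type edge counts) matches the paper's, but the plan has three concrete gaps, the first of which is fatal to the case organization. You locate the extremal configuration at $(s,t,q)=(a-1,0,1)$, i.e.\ $S=V(K_{a-1})$, $T=\emptyset$. But for that pair $\eta(S,\emptyset)=b(a-1)-q\ge b(a-1)-1\ge 0$ whenever $a\ge 2$, so it is not a violating pair at all. The witness that kills $H_{n,a}$ (Lemma~\ref{lem::3}) is $S=\emptyset$, $T=\{$the $K_1$ vertex$\}$, i.e.\ $(s,t,q)=(0,1,1)$. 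Consequently your regime ``$s\le a-2$, where large $q$ or $t$ inflates the block count and lowers the spectral radius'' contains the equality case $H_{n,a}$ itself, with $q=t=1$, and cannot be dispatched as you describe; the whole sign analysis at ``the boundary $s=a-1$'' is aimed at the wrong boundary. Relatedly, your reduction ``merging $T$ together with the non-bad components into one large clique'' discards exactly the information that saves this case: for $s=0$, $t=1$, $q\le 1$ the completed supergraph is essentially $K_n$, whose spectral radius exceeds $\rho(H_{n,a})$. The paper instead shows in these subcases that some vertex of $T$ has degree at most $a-1$ in $G$, so $G$ is a spanning subgraph of $H_{n,a}$ and $\rho(G)<\rho(H_{n,a})$ strictly, or it bounds $\sum_{x\in T}d_{G-S}(x)$ from below via $e(\overline G)\le n-2$ and derives $\eta(S,T)\ge 0$ directly.

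The second gap is your minimality claim: choosing $|T|$ minimum does not obviously force $d_{G-S}(x)\le a-1$ for all $x\in T$. Writing $T'=T\setminus\{x\}$, one gets $\eta(S,T')-\eta(S,T)=a-d_{G-S}(x)+q(S,T)-q(S,T')$, and when $x$ is returned to $G-S-T$ it merges all components adjacent to it into one, so $q(S,T)-q(S,T')$ equals (number of bad merged components) minus an indicator, a quantity that can be as large as $d_{G-S}(x)$ itself; the inequality you need does not follow. (The parity strengthening $\eta(S,T)\le -2$ is a known lemma and is fine in principle, though you assert it without proof; the paper does not use it.) Finally, you omit the preliminary steps the paper needs: establishing that $G$ is connected (so that Hong's bound and the clique-completion lemmas apply), and the separate treatment of $a=b=1$ and $a=b=2$ via Lemma~\ref{lem::7}, which are not covered by the generic argument (the paper's Subcase~1.2, for instance, uses $bs\ge 3$ and so requires $b\ge 3$).
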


	\section{Preliminaries}

In this section, we introduce some notions and lemmas, which are useful in the proof of Theorem \ref{thm::5}.

Let $M$ be a real $n$ $\times$ $n$ matrix, and let $\Pi=\{X_{1},X_{2}, \ldots,X_{k}\}$ be a partition of $[n]=\{1,2,\ldots,n\}$. Then the matrix $M$ can be written as
$$
M=\left(\begin{array}{ccccccc}
	M_{1,1}&M_{1,2}&\cdots&M_{1,k}\\
	M_{2,1}&M_{2,2}&\cdots&M_{2,k}\\
	\vdots&\vdots&\ddots&\vdots\\
	M_{k,1}&M_{k,2}&\cdots&M_{k,k}\\
\end{array}\right).
$$
The \textit{quotient matrix} of $M$ with respect to $\Pi$ is  the matrix $B_{\Pi}=(b_{i,j})^{k}_{i,j=1}$ with
$$
b_{i,j}=\frac{1}{|X_{i}|}\mathbf j^{T}_{|X_{i}|}M_{i,j}\mathbf j_{|X_{j}|}
$$
for all $i,j \in \{ 1,2,...,k \}$, where $\mathbf j_{s}$ denotes the all ones vector in $\mathbb{R}^{s}$. If each block $M_{i,j}$ of $M$ has constant row sum $b_{i,j}$, then $\Pi$ is called an \textit{equitable partition}, and the quotient matrix $B_\Pi$ is called an  \textit{equitable quotient matrix} of $M$. Also, if the eigenvalues of $M$ are all real, we denote them by $\lambda_1(M)\geq \lambda_2(M)\geq \cdots \geq \lambda_n(M)$. 

\begin{lem}\label{lem::1}(Brouwer and Haemers \cite[p. 30]{BH}; Godsil and Royle \cite[pp.196--198]{GR})
	Let $M$ be a real symmetric matrix, and let $B$ be an equitable quotient matrix of $M$. Then the eigenvalues of $B$ are also eigenvalues of $M$. Furthermore, if $M$ is nonnegative and irreducible, then
	$$
	\lambda_1(M)=\lambda_1(B).
	$$
\end{lem}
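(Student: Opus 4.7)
The proof splits naturally into two parts, both flowing from a single matrix identity. First I would introduce the $n \times k$ characteristic matrix $S$ of the partition $\Pi$, with $S_{v,i}=1$ if $v \in X_i$ and $S_{v,i}=0$ otherwise. The equitable partition condition is equivalent to the matrix identity $MS = SB$: for $v \in X_i$, the $(v,j)$-entry of $MS$ is the row sum $\sum_{u \in X_j} M_{v,u}$, which by equitability equals $b_{i,j}$, matching the $(v,j)$-entry of $SB$. Moreover, the columns of $S$ are the characteristic vectors of pairwise disjoint nonempty sets, hence linearly independent, so $Sv = 0$ forces $v = 0$.

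For the first assertion, I would simply observe that if $Bv = \mu v$ with $v \neq 0$, then $M(Sv) = (MS)v = (SB)v = S(Bv) = \mu(Sv)$ and $Sv \neq 0$, so $\mu$ is an eigenvalue of $M$ with eigenvector $Sv$. In particular, every eigenvalue of $B$ is real (since it is an eigenvalue of the real symmetric matrix $M$) and at most $\lambda_1(M)$, giving the preliminary bound $\lambda_1(B) \le \lambda_1(M)$.

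For the second assertion, assume $M$ is nonnegative and irreducible. The Perron--Frobenius theorem supplies a strictly positive eigenvector $x$ of $M$ corresponding to $\lambda_1(M)$. Setting $y := S^T x$, each coordinate $y_i = \sum_{v \in X_i} x_v$ is strictly positive, so $y \neq 0$. Using the symmetry of $M$ together with the identity $MS = SB$, I would compute
\[
B^T y \;=\; B^T S^T x \;=\; (SB)^T x \;=\; (MS)^T x \;=\; S^T M x \;=\; \lambda_1(M)\, S^T x \;=\; \lambda_1(M)\, y,
\]
which exhibits $\lambda_1(M)$ as an eigenvalue of $B^T$, hence of $B$. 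Combined with the inequality $\lambda_1(B) \le \lambda_1(M)$ from the first part, this forces $\lambda_1(B) = \lambda_1(M)$.

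No substantial obstacle arises. The only delicate point is the second part, where both the strict positivity of the Perron eigenvector $x$ (needed so that $y = S^T x$ is nonzero on every block) and the symmetry of $M$ (needed to replace $M^T$ by $M$ after transposing $MS = SB$) play essential roles; each is furnished precisely by the stated hypotheses.
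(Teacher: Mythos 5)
The paper states this lemma as a quoted result from Brouwer--Haemers and Godsil--Royle and gives no proof of its own, so there is nothing internal to compare against; your argument is the standard textbook one (characteristic matrix $S$ of the partition, the identity $MS=SB$, injectivity of $S$ for the first claim, and the Perron--Frobenius positive eigenvector pushed through $S^{T}$ for the second) and it is correct. The one point worth making explicit is that $\lambda_1(B)$ is well defined because, by your first part, every (a priori complex) eigenvalue of $B$ is an eigenvalue of the real symmetric matrix $M$ and hence real --- which you do in fact note.
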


\begin{lem}\label{lem::2}(Hong \cite{Ho})
	Let $G$ be a connected graph with $n$ vertices and $m$ edges. Then
	$$\rho(G)\le \sqrt{2m-n+1}.$$
\end{lem}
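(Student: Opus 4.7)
The plan is to argue by contradiction. Suppose $G$ satisfies the hypotheses but has no $(a,b)$-parity factor. By Corollary \ref{cor::1} there exist disjoint sets $S, T \subseteq V(G)$ with $\eta(S,T) < 0$. A short parity computation sharpens this: writing $U = V(G) \setminus (S \cup T)$, one has $\sum_{x \in T} d_{G-S}(x) \equiv e_G(T,U) \pmod 2$, and since every odd component $C$ of $G-S-T$ satisfies $e_G(V(C),T) \equiv 1 + a|V(C)| \pmod 2$, also $e_G(T,U) \equiv q(S,T) + a(n - |S| - |T|) \pmod 2$. Combining these with $a \equiv b \pmod 2$ yields $\eta(S,T) \equiv an \pmod 2$; since $na$ is even, the inequality tightens to $\eta(S,T) \leq -2$. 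I would then fix a witness $(S,T)$ minimizing $|S|+|T|$, which gives extra structural control on how $T$ attaches to the $q$ odd components.

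The next step is to convert $\eta(S,T) \leq -2$ into an upper bound on $e(G)$. Decomposing
\[
e(G) = e_G(S) + e_G(T) + e_G(U) + e_G(S,T) + e_G(S,U) + e_G(T,U),
\]
one has $e_G(S) \leq \binom{s}{2}$, $e_G(S,T)\leq st$, $e_G(S,U)\leq s(n-s-t)$, and
\[
e_G(T) + e_G(T,U) \leq \sum_{x\in T} d_{G-S}(x) \leq a|T| + q(S,T) - b|S| - 2
\]
directly from $\eta(S,T)\leq -2$. Since $G-S-T$ has at least $q(S,T)$ components, a clique-packing estimate gives $e_G(U) \leq \binom{n-|S|-|T|-q(S,T)+1}{2}$, tight when one component is a clique and the remaining $q-1$ are isolated. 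Summing produces an explicit function $f(n,|S|,|T|,q(S,T))$ with $e(G) \leq f$.

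Lemma \ref{lem::2} then yields $\rho(G) \leq \sqrt{2f - n + 1}$. On the other side, Lemma \ref{lem::1} applied to the equitable partition $\{V(K_{a-1}), \{v_0\}, V(K_{n-a})\}$ of $H_{n,a}$ identifies $\rho(H_{n,a})$ as the largest root of the cubic
\[
(\lambda - n + a + 1)(\lambda - a + 1)(\lambda + 1) = (a-1)(n-a)\lambda,
\]
from which one extracts a lower estimate of the form $\rho(H_{n,a}) \geq n - a - O(1/n)$. A case analysis on $(s, t, q) = (|S|,|T|,q(S,T))$ — splitting on whether $s=0$ or $s\geq 1$, whether $t$ is small or large, and whether $q=1$ or $q\geq 2$ — shows that the numerical hypothesis $n \geq \max\{2a(a+1)+b-3,(2a+2)(a+1)\}$ is precisely large enough to force $\sqrt{2f - n + 1} < \rho(H_{n,a})$ in every case, contradicting $\rho(G) \geq \rho(H_{n,a})$.

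The main obstacle will be the near-extremal triple $(s,t,q) = (0,1,1)$, which is exactly how $H_{n,a}$ itself witnesses its own failure of Corollary \ref{cor::1}: there $f(n,0,1,1)$ is very close to $e(H_{n,a}) = \binom{n}{2} - (n-a)$ and Lemma \ref{lem::2} alone is too crude to separate $\rho(G)$ from $\rho(H_{n,a})$. In this regime one must exploit the structural content of $\eta(S,T)\leq -2$, which localizes a single vertex $v_0$ of $G$ with $d_G(v_0) \leq a-1$ and forces $G$ to be a spanning subgraph of $K_{a-1} \nabla (K_1 \cup H')$ for some graph $H'$ on $n-a$ vertices. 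A direct Perron-vector or edge-addition argument against $H_{n,a}$ then shows $\rho(G) < \rho(H_{n,a})$ unless $H' = K_{n-a}$, in which case $G \cong H_{n,a}$, completing the proof.
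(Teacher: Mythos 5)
There is a fundamental mismatch here: the statement you were asked to prove is Lemma \ref{lem::2}, Hong's bound $\rho(G)\le\sqrt{2m-n+1}$ for a connected graph with $n$ vertices and $m$ edges. What you have written is instead a proof sketch of the paper's main result, Theorem \ref{thm::5} (the spectral radius condition for $(a,b)$-parity factors). Nothing in your proposal addresses the inequality $\rho(G)\le\sqrt{2m-n+1}$; worse, you explicitly invoke Lemma \ref{lem::2} as a tool in your argument (``Lemma \ref{lem::2} then yields $\rho(G)\le\sqrt{2f-n+1}$''), so as a proof of that lemma the proposal would be circular. The paper itself does not reprove this lemma --- it is quoted from Hong's 1988 paper --- but that does not change the fact that your text is a proof of a different statement.

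For the record, the standard argument for the lemma is short: for a connected graph with $n\ge 2$ every vertex has degree at least $1$, so for any vertex $u$ the $u$-th row sum of $A^2$ equals $\sum_{v\sim u}d_v=2m-d_u-\sum_{w\ne u,\,w\not\sim u}d_w\le 2m-d_u-(n-1-d_u)=2m-n+1$; since $\rho(G)^2=\rho(A^2)$ is at most the maximum row sum of the nonnegative matrix $A^2$, the bound follows. If your intent was to sketch Theorem \ref{thm::5}, your outline is broadly aligned with the paper's actual proof (contradiction via Corollary \ref{cor::1}, edge-counting plus Lemma \ref{lem::2}, case analysis on $s,t,q$, and a spanning-subgraph comparison with $H_{n,a}$ in the near-extremal case), but it should be resubmitted against that statement, not this one.
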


 \begin{lem}\label{lem::3}
Let $a,b,n$ be three positive integers such that $a\leq b$, $a\equiv b\pmod 2$, and $na$ is even.  Then  $H_{n,a}$ has no $(a,b)$-parity factors.
\end{lem}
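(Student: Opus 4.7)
The plan is to apply Corollary \ref{cor::1} to a carefully chosen pair $(S,T)$ and exhibit $\eta(S,T) < 0$, which by the corollary forces $H_{n,a}$ to have no $(a,b)$-parity factor. The structural feature of $H_{n,a}=K_{a-1}\nabla(K_1\cup K_{n-a})$ that makes this approach succeed is the unique vertex $v$ of the $K_1$ summand: its neighbors in $H_{n,a}$ are exactly the $a-1$ vertices of $K_{a-1}$, so $d_{H_{n,a}}(v)=a-1$, which is just one short of the threshold $a$ required of every vertex in an $(a,b)$-parity factor.

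First, take $S=\emptyset$ and $T=\{v\}$. Then $b|S|-a|T|+\sum_{x\in T}d_{G-S}(x)=0-a+(a-1)=-1$, and $G-S-T=H_{n,a}-v$ equals $K_{n-1}$, a single component $C$ with $|V(C)|=n-1$ and $e_G(V(C),T)=a-1$.

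The next step is to verify $q(\emptyset,\{v\})=1$ by checking that $a(n-1)+(a-1)$ is odd. Using the hypothesis that $na$ is even, both sub-cases on the parity of $a$ give the desired conclusion: if $a$ is even then $a(n-1)$ is even and $a-1$ is odd; if $a$ is odd then $n$ is forced to be even, so $n-1$ is odd, $a(n-1)$ is odd and $a-1$ is even. In either case $\eta(\emptyset,\{v\})=-1-1=-2<0$, and Corollary \ref{cor::1} then precludes the existence of an $(a,b)$-parity factor.

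There is no real obstacle in this argument; the only nontrivial step is the routine parity check for $q$. As a sanity check, one can also bypass Corollary \ref{cor::1} entirely and argue directly: any $(a,b)$-parity factor $F$ must satisfy $d_F(v)\ge a$, but $d_F(v)\le d_{H_{n,a}}(v)=a-1<a$, a contradiction.
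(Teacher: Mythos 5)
Your proposal is correct and takes essentially the same approach as the paper: both choose $S=\emptyset$ and $T=\{v\}$ with $v$ the $K_1$-vertex and compute $\eta(\emptyset,\{v\})=-2<0$; you additionally spell out the parity verification that $q=1$ (equivalently, $a(n-1)+(a-1)=an-1$ is odd since $an$ is even), which the paper simply asserts. The direct one-line argument via $d_F(v)\le d_{H_{n,a}}(v)=a-1<a$ is also valid and even cleaner.
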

\begin{proof}
	Recall that $H_{n,a}=K_{a-1}\nabla (K_1\cup K_{n-a})$. Let $V_1=V(K_1)$, $V_2=V(K_{a-1})$ and $V_3=V(K_{n-a})$.  Take $S=\emptyset$ and $T=V_1$. Clearly, $q(S,T)=1$. Then we have
	$$\eta_G(S,T)=b|S|-a|T|+\sum_{x\in T}d_{G-S}(x)-q(S,T)=-a+a-1-1=-2<0,$$
	contrary to the inequality in \eqref{equ::1}. Therefore, by Corollary \ref{cor::1}, we conclude that  $H_{n,a}$ has no $(a,b)$-parity factors.
\end{proof}

\begin{lem}(Zhao, Huang and Wang\cite{ZHW})\label{lem::5}
	Let $n_1 \ge n_2 \ge \cdots \ge n_q$ be positive integers and $s$, $q$ be positive integers. Let $n=s+\sum _{i=1}^{q}n_i$, we have
	\begin{equation}
		\rho (K_{s}\nabla (K_{n_{1}}\cup K_{n_{2}}\cup \cdots\cup K_{n_{q}}) )\leq \rho (K_{s}\nabla (K_{n-s-q+1}\cup(q-1)K_{1})),
		\nonumber
	\end{equation}
	where the equality holds if and only if $(n_{1},n_{2},\cdots,n_{q})=(n-s-q+1,1,\cdots,1)$.
\end{lem}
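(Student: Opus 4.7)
My plan is to prove the lemma by an inductive vertex-shifting argument: starting from any configuration $(n_1,\ldots,n_q)$ that is not $(n-s-q+1,1,\ldots,1)$, I will move a single vertex out of some clique $K_{n_j}$ of size $\ge 2$ with $j\ge 2$ into the largest clique $K_{n_1}$ and show that this strictly increases $\rho$. Iterating a bounded number of times pushes the configuration to $(n-s-q+1,1,\ldots,1)$, simultaneously delivering the inequality and forcing uniqueness in the equality case.

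The first step is to read off the Perron eigenvector of $G:=K_s\nabla(K_{n_1}\cup\cdots\cup K_{n_q})$ from an equitable partition. The partition $\{V(K_s),V(K_{n_1}),\ldots,V(K_{n_q})\}$ is equitable, so by Lemma \ref{lem::1} the spectral radius $\rho:=\rho(G)$ coincides with that of the corresponding quotient matrix; when $s\ge 1$ the graph $G$ is connected, so by Perron--Frobenius the eigenvector $x$ associated with $\rho$ is positive and constant on each cell, say $x_v=x_0$ on $V(K_s)$ and $x_v=x_i$ on $V(K_{n_i})$. Solving the quotient-matrix eigenequation gives $x_i = s x_0/(\rho-n_i+1)$, from which I extract the monotonicity $x_p\ge x_r$ whenever $n_p\ge n_r$. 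The degenerate case $s=0$ is handled directly: $G$ is a disjoint union of cliques, $\rho(G)=n_1-1$, and this is maximized precisely when $n_1=n-q+1$ and all other $n_i=1$.

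The heart of the argument is then the following one-step comparison. Fix indices with $p:=n_i\ge r:=n_j\ge 2$ and $i\ne j$, and let $G'$ be obtained from $G$ by moving a vertex $v\in V(K_r)$ from $V(K_r)$ into $V(K_p)$; thus $G'=K_s\nabla(K_{p+1}\cup K_{r-1}\cup\cdots)$ with the remaining clique-sizes unchanged. Identifying the vertex sets of $G$ and $G'$, the matrix $A(G')-A(G)$ has $+1$ entries on the $p$ pairs $\{v,w\}$ with $w\in V(K_p)$ and $-1$ entries on the $r-1$ pairs $\{v,w\}$ with $w\in V(K_r)\setminus\{v\}$, so a direct expansion gives
\begin{equation}
x^{\top} A(G')x - x^{\top} A(G)x \;=\; 2 x_r\bigl(p\,x_p-(r-1)x_r\bigr).
\nonumber
\end{equation}
Because $p\ge r\ge 2$ and $x_p\ge x_r>0$, I have $p\,x_p\ge r\,x_r>(r-1)x_r$, so the right-hand side is strictly positive. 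The Rayleigh quotient of $A(G')$ at $x$ thus exceeds $\rho(G)$, forcing $\rho(G')>\rho(G)$.

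Finally, I iterate: while some $n_i\ge 2$ with $i\ge 2$ exists, apply the move above; the spectral radius strictly increases, and the nonnegative integer $\sum_{i\ge 2}(n_i-1)$ strictly decreases, so the process terminates in finitely many steps at $(n-s-q+1,1,\ldots,1)$. This yields the inequality, and strictness at every intermediate step pins down the equality case. The step I expect to be most delicate is the degenerate boundary $r=2$, where $K_{r-1}$ collapses to an isolated vertex; but the inequality $p\,x_p\ge 2\,x_r>x_r=(r-1)x_r$ still holds verbatim, so no separate treatment is needed. The only genuine external input is Lemma \ref{lem::1}, used just once to legitimize computing with the quotient matrix and to guarantee the constant-on-cells structure of the Perron eigenvector.
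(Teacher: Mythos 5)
The paper does not prove this lemma at all: it is quoted verbatim from Zhao, Huang and Wang \cite{ZHW}, so there is no in-paper argument to compare yours against. Your blind proof is nevertheless correct and self-contained, and it follows the standard route for results of this type (and, as far as I can tell, essentially the route taken in \cite{ZHW} itself): read off the Perron vector from the equitable partition, derive $x_i=sx_0/(\rho-n_i+1)$ with $\rho-n_i+1\geq s>0$ so that $x_i$ is monotone in $n_i$, and then perform a Kelmans-type vertex shift whose effect on the Rayleigh quotient is $2x_r\bigl(p\,x_p-(r-1)x_r\bigr)>0$, which strictly increases $\rho$ and terminates at $(n-s-q+1,1,\ldots,1)$. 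The computation of the quadratic-form difference, the strictness needed for the equality characterization, and the boundary case $r=2$ are all handled correctly. The only superfluous piece is your treatment of $s=0$: the lemma assumes $s$ is a positive integer, so the disconnected degenerate case never arises, but including it does no harm.
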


    \begin{lem}\label{lem::6}
    	Let $n$ and $s$ be positive integers with $n \ge 4s+1$. Then 
    	$$
    	\rho (K_s \nabla (K_{n-s-3} \cup 3K_1 )) < n-2.
    	$$
    \end{lem}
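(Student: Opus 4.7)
The plan is to apply the edge-counting bound of Lemma \ref{lem::2} (Hong's inequality) to the graph $G := K_s \nabla (K_{n-s-3} \cup 3K_1)$. Since $s \ge 1$, every vertex of the $K_s$-part is joined to every other vertex of $G$, so $G$ is connected and Lemma \ref{lem::2} gives $\rho(G) \le \sqrt{2e(G) - n + 1}$. Hence it suffices to prove
$$
2e(G) - n + 1 < (n-2)^2.
$$

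The edge count splits into three contributions --- inside $K_s$, inside $K_{n-s-3}$, and across the join (the $3K_1$-part contributes nothing internally) --- yielding
$$
e(G) = \binom{s}{2} + \binom{n-s-3}{2} + s(n-s).
$$
Routine expansion gives $2e(G) = n^2 - 7n + 6s + 12$, and substituting into the inequality above reduces the desired bound to $4n > 6s + 9$. This is implied by the hypothesis $n \ge 4s+1$ for every positive integer $s$, so the proof is complete.

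I do not expect a genuine obstacle; the only points requiring care are the bookkeeping in the edge count and the verification that $G$ is connected so that Lemma \ref{lem::2} applies. An alternative approach would be to use Lemma \ref{lem::1} on the equitable partition $\{V(K_s),V(K_{n-s-3}),V(3K_1)\}$ of $G$, work with the $3\times 3$ quotient matrix, and show that its characteristic polynomial is positive at $x=n-2$ while $n-2$ exceeds its second-largest eigenvalue; however, the Hong-bound route above is both shorter and avoids this extra eigenvalue comparison.
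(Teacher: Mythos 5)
Your proof is correct, and it takes a genuinely different route from the paper. The paper partitions $V(K_s\nabla(K_{n-s-3}\cup 3K_1))$ into the three natural classes, forms the $3\times 3$ equitable quotient matrix $B_\Pi$, shows its characteristic polynomial satisfies $f(n-2)>0$, and then rules out $\lambda_1(B_\Pi)\ge n-2$ by combining the sign of $f(n-4)$ with a trace argument, finally invoking Lemma \ref{lem::1}. You instead apply Hong's bound (Lemma \ref{lem::2}) directly: your edge count $e(G)=\binom{s}{2}+\binom{n-s-3}{2}+s(n-s)$ is right, the identity $2e(G)=n^2-7n+6s+12$ checks out, and the reduction of $2e(G)-n+1<(n-2)^2$ to $4n>6s+9$ is correct and follows from $n\ge 4s+1$ with $s\ge 1$ (indeed $4n\ge 16s+4>6s+9$). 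Connectivity holds since $s\ge 1$ makes the $K_s$-part dominating, so Lemma \ref{lem::2} applies. Your route is shorter and purely combinatorial; the paper's quotient-matrix route is more work but locates $\rho$ as a root of an explicit cubic, which would allow sharper or exact conclusions if needed. For this lemma either argument suffices, and yours arguably fits the paper better since Lemma \ref{lem::2} is already used elsewhere in the proof of Theorem \ref{thm::5}.
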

    \begin{proof}
    	Suppose $L_{n,s}=K_s \nabla (K_{n-s-3} \cup 3K_1 )$. Let $V_{1}=V(K_s)$, $V_2=V(3K_1)$ and $V_3=V(K_{n-s-3})$. It is easy to see that the partition  $\Pi: V(L_{n,s})=V_1\cup V_2\cup V_3$ is an equitable partition of $L_{n,s}$, and the matrix $B_{\Pi}$ can be written as
    	$$
    	B_\Pi=\left(\begin{array}{ccccccc}
    		s-1&3&n-s-3\\
    		s&0&0\\
    		s&0&n-s-4\\
    	\end{array}\right).
    	$$
    	Let $f(x)$ denote the characteristic polynomial of $B_\Pi$. By a simple calculation, we have
    	$$
    	\begin{aligned}
    		f(n-2)&=|(n-2)I-B_\Pi|\\
    		&= 2n^{2}-6n-3s^{2}-6s+4\\
            &\ge 2(4s+1)^{2}-6(4s+1)-3s^{2}-6s+4\\
    		&\ge 29s^{2}-14s\\
    		&> 0.
    	\end{aligned} 
    	$$
    	Then we claim that $\lambda_1(B_\Pi)< n-2$. If not, since $f(n-4)=-3s^2<0$, we have $\lambda_3(B_\Pi)>n-4$, and hence $\lambda_1(B_\Pi)+\lambda_2(B_\Pi)+\lambda_3(B_\Pi)> 3n-12$. On the other hand,  $\lambda_1(B_\Pi)+\lambda_2(B_\Pi)+\lambda_3(B_\Pi)=\mathrm{trace}(B_\Pi)=n-4$, we obtain a contradiction. Therefore, by Lemma \ref{lem::1},
    	$$
    	\rho(L_{n,s})=\lambda_1(B_\Pi)< n-2.
    	$$
    \end{proof}

    By using a similar way as in the proof of Lemma \ref{lem::6}, we can obtain the following conclusion.
    \begin{lem}\label{lem::4}
        Let $n \ge 4$ be a positive integer. Then
        $$
        \rho (K_1 \nabla (K_{n-3} \cup 2K_1)) < n-2.
        $$
    \end{lem}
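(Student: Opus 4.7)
The plan is to follow the proof of Lemma \ref{lem::6} almost verbatim, with $s$ effectively set to $1$ and with $3K_1$ replaced by $2K_1$. Let $G=K_1\nabla(K_{n-3}\cup 2K_1)$, and partition $V(G)=V_1\cup V_2\cup V_3$ with $V_1=V(K_1)$, $V_2=V(2K_1)$, $V_3=V(K_{n-3})$. A direct degree count shows this partition is equitable, and the quotient matrix is
$$B_\Pi=\begin{pmatrix}0 & 2 & n-3\\ 1 & 0 & 0\\ 1 & 0 & n-4\end{pmatrix},$$
with trace $n-4$. Since $B_\Pi$ arises from the equitable partition of a nonnegative irreducible matrix, Lemma \ref{lem::1} yields $\rho(G)=\lambda_1(B_\Pi)$, so it suffices to prove $\lambda_1(B_\Pi)<n-2$.

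Let $f(x)=\det(xI-B_\Pi)$. A routine cofactor expansion gives a monic cubic, which I would evaluate at the two test points used in Lemma \ref{lem::6}, namely $n-2$ and $n-4$. The expected values are $f(n-2)=n^{2}-3n-2>0$ for every $n\geq 4$ and $f(n-4)=-(n-4)(n-3)<0$ for every $n\geq 5$. Granted these, suppose toward a contradiction that $\lambda_1(B_\Pi)\geq n-2$. The sign pattern of a monic cubic in terms of its three real roots, combined with $f(n-2)>0$ and $n-2\leq\lambda_1(B_\Pi)$, forces $n-2$ into the middle positive interval $(\lambda_3(B_\Pi),\lambda_2(B_\Pi))$, so $\lambda_2(B_\Pi)>n-2$. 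Then $f(n-4)<0$ together with $\lambda_2(B_\Pi)>n-4$ forces $n-4<\lambda_3(B_\Pi)$. Summing yields
$$\lambda_1(B_\Pi)+\lambda_2(B_\Pi)+\lambda_3(B_\Pi)>3(n-4)=3n-12,$$
which contradicts $\mathrm{trace}(B_\Pi)=n-4$ for every $n\geq 5$. This is exactly the trace argument used at the end of the proof of Lemma \ref{lem::6}.

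The only genuine obstacle is the boundary case $n=4$, where $K_{n-3}\cup 2K_1=3K_1$, so $G\cong K_{1,3}$ and $f(n-4)=f(0)=0$, causing the cubic/trace argument to degenerate. I would dispatch this case separately by observing $\rho(K_{1,3})=\sqrt{3}<2=n-2$. Apart from this single corner, the verification is mechanical and mirrors Lemma \ref{lem::6} step for step, so I do not anticipate further difficulties; the only real work is confirming the explicit expressions for $f(n-2)$ and $f(n-4)$ and checking the threshold $n\geq 4$ via the $K_{1,3}$ computation.
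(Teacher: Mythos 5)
Your proposal is correct and is essentially the paper's own argument: the paper gives no separate proof of Lemma \ref{lem::4}, stating only that it follows "by a similar way as in the proof of Lemma \ref{lem::6}," which is exactly the quotient-matrix/trace computation you carry out (your quotient matrix, the values $f(n-2)=n^2-3n-2$ and $f(n-4)=-(n-4)(n-3)$, and the trace contradiction all check out). Your separate treatment of the degenerate case $n=4$ via $\rho(K_{1,3})=\sqrt{3}<2$ is a sensible extra precaution that the paper glosses over.
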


    Fiedler and Nikiforov \cite{FN} gave a spectral radius condition for graphs to have a Hamilton path or Hamilton cycle. 
    
    \begin{lem}(Fiedler and Nikiforov \cite{FN})
        Let $G$ be a graph of order $n$. If
        \begin{equation}\label{equ::2}
            \rho (G) \ge n-2,
        \end{equation}
        then $G$ contains a Hamiltonian path unless $G \cong H_{n,1}$. If the inequality (\ref{equ::2}) is strict, then $G$ contains a Hamiltonian cycle unless $G \cong H_{n,2}$.
    \end{lem}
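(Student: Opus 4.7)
The plan is to establish this spectral condition by a dichotomy on connectedness followed by an application of Hong's inequality (Lemma~\ref{lem::2}) together with a closure-based analysis of non-Hamiltonian graphs. First I would dispatch the disconnected case: if $G$ is disconnected, then $\rho(G)$ equals the spectral radius of its largest component $C$, which is at most $|V(C)|-1$, with equality iff $C\cong K_{|V(C)|}$. Thus $\rho(G)\ge n-2$ forces $|V(C)|\ge n-1$, and disconnectedness with $|V(G)|=n$ yields $C\cong K_{n-1}$ plus a single isolated vertex, i.e.\ $G\cong K_{n-1}\cup K_1=H_{n,1}$, which indeed has no Hamiltonian path. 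Under the strict inequality $\rho(G)>n-2$ the disconnected case is excluded outright.

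For the Hamiltonian path part I next assume $G$ is connected and non-traceable, aiming for a contradiction. I would invoke the Chv\'atal--Bondy closure with threshold $n-1$: iteratively adjoining every non-edge $uv$ with $d_G(u)+d_G(v)\ge n-1$ produces a supergraph $\widehat G$ that is traceable iff $G$ is. Non-traceability gives $\widehat G\neq K_n$, hence there exist non-adjacent $u,v\in V(G)$ with $d_G(u)+d_G(v)\le n-2$. On the other hand, Hong's bound converts $\rho(G)\ge n-2$ into $e(G)\ge \binom{n-1}{2}+1$. A Perron-eigenvector argument then exploits the degree constraint: letting $x$ be the Perron vector and $M=\max_w x_w$, the identity $\rho(x_u+x_v)=\sum_{w\sim u}x_w+\sum_{w\sim v}x_w$ combined with $\sum_{w\sim u}x_w+\sum_{w\sim v}x_w\le (d_G(u)+d_G(v))M\le (n-2)M$ gives $x_u+x_v\le M$. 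Combining this constraint with the high edge count should force a near-complete structure whose only realization compatible with non-traceability is $H_{n,1}$, contradicting connectedness.

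For the Hamiltonian cycle part I would repeat the closure argument with Ore's threshold $n$, obtaining non-adjacent $u,v$ with $d_G(u)+d_G(v)\le n-1$. A quotient-matrix computation parallel to Lemma~\ref{lem::6}, applied to $H_{n,2}=K_1\nabla(K_1\cup K_{n-2})$, verifies that $\rho(H_{n,2})>n-2$ while $H_{n,2}$ has no Hamiltonian cycle (the unique $K_1$-vertex used in the join is a cut vertex). A Kelmans-type transposition, which weakly increases $\rho$ while preserving the absence of a Hamiltonian cycle, drives any extremal example to $H_{n,2}$, yielding the desired characterization.

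The main obstacle will be the final structural classification: showing that the spectral hypothesis together with a non-adjacent pair of small degree sum forces $G$ to be exactly $H_{n,1}$ (resp.\ $H_{n,2}$), and not some other graph of comparable edge count. I expect this step to require iterated Kelmans transformations with careful bookkeeping of which graphs retain non-traceability (resp.\ non-Hamiltonicity) under those transformations, followed by a per-case elimination of alternative threshold-graph configurations.
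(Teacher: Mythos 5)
This lemma is quoted from Fiedler and Nikiforov \cite{FN}; the paper offers no proof of its own, so the comparison is with the known argument, which is short: Hong's bound (Lemma~\ref{lem::2}) converts $\rho(G)\ge n-2$ into $e(G)\ge\binom{n-1}{2}+1$ for connected $G$, and then the classical Erd\H{o}s--Ore edge-extremal theorems for traceability and Hamiltonicity (with their known extremal graphs) finish the job. Your disconnected case is handled correctly, and your closure setup is sound, but the proposal as written has a genuine gap: the decisive structural step is never executed. You obtain a non-adjacent pair $u,v$ with $d_G(u)+d_G(v)\le n-2$ (resp.\ $\le n-1$) and then say that Perron-vector estimates and ``iterated Kelmans transformations with careful bookkeeping'' \emph{should} force $G\cong H_{n,1}$ (resp.\ $H_{n,2}$). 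That classification is the entire content of the theorem, and asserting that Kelmans transpositions preserve non-traceability/non-Hamiltonicity and terminate at exactly these graphs is itself a nontrivial claim you do not prove. You also never address the second extremal graph for the cycle case, $K_2\nabla\overline{K_3}$ at $n=5$, which satisfies $e(G)=\binom{4}{2}+1$ and is non-Hamiltonian; it must be excluded by computing $\rho(K_2\nabla\overline{K_3})=3=n-2$, which your sketch omits.

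The irony is that your own closure step already contains a one-line finish that makes the Perron-vector and Kelmans machinery unnecessary. Since $uv\notin E(G)$, one has $e(G)=e(G-u-v)+d_G(u)+d_G(v)\le\binom{n-2}{2}+(n-2)=\binom{n-1}{2}$, which directly contradicts $e(G)\ge\binom{n-1}{2}+1$ and settles the Hamiltonian path statement for connected $G$ with no further structure theory. For the cycle statement the same count gives $e(G)\le\binom{n-1}{2}+1$, hence equality throughout: $G-u-v$ is complete, $d_G(u)+d_G(v)=n-1$, and a short case analysis on how $u$ and $v$ attach to $K_{n-2}$ identifies the non-Hamiltonian survivors as $H_{n,2}$ and (for $n=5$) $K_2\nabla\overline{K_3}$, the latter eliminated by its spectral radius. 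I would recommend replacing the speculative second half of your argument with this elementary counting, or simply citing the Erd\H{o}s--Ore theorems as Fiedler and Nikiforov do.
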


    Note that the Hamilton path of even order contains a $1$-factor and the Hamilton cycle is a special $2$-factor. Thus we have the following result immediately.

    \begin{lem}\label{lem::7}
        Let $G$ be a graph of order $n$. 
    \begin{enumerate}[(i)]
        \item If $\rho (G) \ge n-2$ and $n$ be even, then $G$ has a $1$-factor unless $G \cong H_{n,1}$.
        \item If $\rho (G) > n-2$, then $G$ has a $2$-factor unless $G \cong H_{n,2}$.
	\end{enumerate}
    \end{lem}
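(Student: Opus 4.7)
The plan is to deduce both parts directly from the Fiedler--Nikiforov theorem quoted just before the statement, using the elementary observations that a Hamilton path on an even number of vertices contains a perfect matching and that any Hamilton cycle is by definition a $2$-factor. No separate spectral argument is needed beyond invoking that theorem.

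For part (i), I would proceed as follows. Assuming $\rho(G)\ge n-2$, the Fiedler--Nikiforov theorem gives that either $G\cong H_{n,1}$ or $G$ contains a Hamilton path $P=v_1v_2\cdots v_n$. In the latter case, since $n$ is even, the set of edges $\{v_1v_2,\, v_3v_4,\, \ldots,\, v_{n-1}v_n\}$ forms a perfect matching on $V(G)$, which is precisely a $1$-factor of $G$. Thus $G$ has a $1$-factor unless $G\cong H_{n,1}$, and one should note that the exception is genuine: $H_{n,1}=K_1\cup K_{n-1}$ has a component of odd order $n-1$ when $n$ is even and therefore admits no $1$-factor.

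For part (ii), assume $\rho(G)>n-2$. The Fiedler--Nikiforov theorem then yields that either $G\cong H_{n,2}$ or $G$ contains a Hamilton cycle $C$. In the latter case $C$ is a spanning $2$-regular subgraph of $G$, which is by definition a $2$-factor. This completes the argument.

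The only thing to be careful about is the ``immediate corollary'' step itself, i.e.\ ensuring that the conclusions of Fiedler--Nikiforov really do transfer verbatim to the exceptional graphs $H_{n,1}$ and $H_{n,2}$ named in this lemma. Since the exceptional graphs and the strict/non-strict inequalities are already matched up with those in the Fiedler--Nikiforov statement, there is no real obstacle to overcome; the proof is essentially a two-line translation from ``Hamilton path/cycle'' to ``$1$-factor/$2$-factor,'' and I would write it accordingly.
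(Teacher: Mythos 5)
Your proposal is correct and is essentially identical to the paper's justification, which likewise derives the lemma immediately from the Fiedler--Nikiforov theorem by observing that a Hamilton path of even order contains a perfect matching and that a Hamilton cycle is a $2$-factor. No further comparison is needed.
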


\section{Proof of the main result}
	
In this section, we shall give the proof of Theorem \ref{thm::5}.

    {\flushleft \it Proof of Theorem \ref{thm::5}.} If $b=a=1$, according to assumption, we have $\rho (G) \ge \rho (H_{n,1}) = n-2$ and $n$ is even. By Lemma \ref{lem::7} and Lemma \ref{lem::3}, $G$ has a $1$-factor, i.e., $[1,1]$-parity factor unless $G \cong K_1\cup K_{n-1}=H_{n,1}$. If $b=2$, note that $a$  and $b$ have the same parity, so $a=2$. Thus, we can also obtain the conclusion by Lemma \ref{lem::7} and Lemma \ref{lem::3}. Suppose that $b \ge 3$. Since $a\geq 1 $, by Lemma \ref{lem::3}, we see that $H_{n,a}$ has no $(a,b)$-parity factors. For this reason, we always suppose that $G\ncong H_{n,a}$. By assumption, we have $\rho(G) \geq \rho(H_{n,a}) \ge \rho(K_{n-1})=n-2$ because $K_{n-1}$ is a proper subgraph of $H_{n,a}$. Now we claim that  $G$ is a connected graph. If $G$ is not a connected graph, we assume that ${G_1,G_2,\dots,G_k}$ ($k \ge 2$) denote the connected components of $G$. As each component of $G$ would be a subgraph of $K_{n-1}$, we have $\rho (G)=\max \{\rho (G_1),\rho (G_2),\dots,\rho (G_k)\} \leq \rho(K_{n-1}) = n-2$. This implies that $\rho (G)=n-2$ , hence $G\cong K_1\cup K_{n-1}=H_{n,1}$. If $a=1$, then we immediately obtain a contradiction because  $G\ncong H_{n,a}$. If $a>1$, then $H_{n,a}$ is connected, and $\rho(G) \geq \rho(H_{n,a}) >\rho(K_{n-1})=n-2$, which is impossible.
    
    Suppose to the contrary that $G$ has no  $(a,b)$-parity factors. By Corollary \ref{cor::1}, there exists two disjoint subsets $S$, $T$ of $V(G)$ such that
	\begin{equation}\label{equ::5}
		\eta (S,T) = b|S|-a|T|+\sum_{x \in T}d_{G-S}(x) -q(S,T) \leq -1,
	\end{equation}
    where $q(S, T)$ is defined in \eqref{equ::1}. Let $s=|S|$, $t=|T|$ and $q=q(S,T)$. We have the following claim.
    
    \begin{claim}\label{claim::1}
    	$e(\overline{G}) \leq n-2$.
    \end{claim}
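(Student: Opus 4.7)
The plan is to use Hong's bound (Lemma \ref{lem::2}) together with the lower bound $\rho(G) \geq n-2$ already established in the preamble of the proof of Theorem \ref{thm::5}. This is a very short argument, so most of the work is just bookkeeping.

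First, I would recall that in the preceding paragraphs of the proof it has been shown that $G$ is connected and $\rho(G) \geq \rho(H_{n,a}) \geq \rho(K_{n-1}) = n-2$, since $K_{n-1}$ is a subgraph of $H_{n,a}$. With $G$ connected on $n$ vertices and $m = e(G)$ edges, Lemma \ref{lem::2} yields
\begin{equation*}
n-2 \leq \rho(G) \leq \sqrt{2e(G) - n + 1}.
\end{equation*}
Squaring and rearranging gives $2e(G) \geq (n-2)^2 + n - 1 = n^2 - 3n + 3$, hence $e(G) \geq \tfrac{n^2 - 3n + 3}{2}$.

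Next, I would convert this into an upper bound on $e(\overline{G})$ via $e(\overline{G}) = \binom{n}{2} - e(G)$. A direct calculation gives
\begin{equation*}
e(\overline{G}) \leq \frac{n(n-1)}{2} - \frac{n^2 - 3n + 3}{2} = \frac{2n - 3}{2} = n - \tfrac{3}{2}.
\end{equation*}
Since $e(\overline{G})$ is a non-negative integer, this forces $e(\overline{G}) \leq n - 2$, which is exactly the claim.

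There is no real obstacle here; the only subtlety is ensuring we have the strict enough lower bound $\rho(G) \geq n-2$ to plug into Hong's inequality, but this was already secured earlier in the proof of Theorem \ref{thm::5} (and uses $a \geq 1$, so that $K_{n-1} \subseteq H_{n,a}$). The parity-integrality step at the end is what turns the half-integer bound $n - 3/2$ into the clean bound $n - 2$ stated in the claim.
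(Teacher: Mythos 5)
Your proposal is correct and follows essentially the same route as the paper: combining Hong's bound with $\rho(G)\geq n-2$ and then using integrality to sharpen the resulting half-integer bound. The only cosmetic difference is that you apply the integrality rounding to $e(\overline{G})$ while the paper rounds $e(G)$ up to $\binom{n-1}{2}+1$ first; these are equivalent.
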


    {\flushleft \it Proof of Claim 
	\ref{claim::1}.} 
    	Combining $\rho(G) \ge n-2$ with Lemma \ref{lem::2}, we have $e(G)\geq \binom{n-1}{2} +1$, and so $e(\overline{G}) \leq n-2$. This proves Claim \ref{claim::1}. 	 \qed\vspace{3mm}

  We consider the following three situations.
    
    {\flushleft {\it Case 1.} $t=0$.}
    
    In this situation, we suppose consider the following two cases.
    
    {\flushleft {\it Subcase 1.1.} $s=0$.}
    
    According to (\ref{equ::5}), we have $\eta(S,T)=-q \leq -1$, then $q \ge 1$. Note that $G$ is connected, so we have $q=1$. According to the definition of $q(S,T)$, we see that $na$ is odd, which is impossible.
    
    {\flushleft {\it Subcase 1.2.} $s \ge 1$.}
    
     In this situation, if $q \le 3$, then we have $bs \le q-1 \le 2$ by (\ref{equ::5}), contrarying to $bs \ge 3$. If $q \ge 4$, let $l_1$, $l_2$, ..., $l_q$ be the number of vertices of these q components of $G-S$ respectively, and let $l_{q+1}$, $l_{q+2}$, $\cdots$, $l_{q'}$ be the number of vertices of remaining components of $G-S$ respectively. We consider that graph $H$ satisfys $H \cong K_s \nabla (K_{n_1} \cup K_{n_2} \cup \dots \cup K_{n_q})$, where $n_1=l_1+l_{q+1}+l_{q+2}+ \cdots +l_{q'}$, $n_2=l_2$, $\cdots$, $n_q=l_q$. Thus $G$ is a spanning subgraph of $H$. Note that $3s \le bs \le q-1 \le n-s-1$, hence $n \ge 4s+1$. By Lemma \ref{lem::5} and Lemma \ref{lem::6}, we obtain 
    $$
    \begin{aligned}
    \rho(G) &\le \rho(H)\\
    &\leq \rho(K_s \nabla (K_{n_{1}}\cup K_{n_{2}} \cup \cdots \cup K_{n_{q}})) \\
    &\le \rho (K_{s}\nabla (K_{n-s-q+1}\cup(q-1)K_{1}))\\ 
    &\leq \rho(K_{s}\nabla (K_{n-s-3}\cup3K_{1}))\\
    &< n-2,
    \end{aligned}
    $$ 
    a contradiction.

  {\flushleft {\it Case 2.} $t=1$.}
  
  In this situation, we suppose $T = \{ x_0 \}$, and consider the following two cases.
  
  {\flushleft {\it Subcase 2.1.} $s=0$.}
  
  According to (\ref{equ::5}), we have $\eta(S,T)=-a+d_{G}(x_0)-q \leq -1$, then $d_{G}(x_0) \leq a+q-1$. 
  
  If $q \geq 3$, we consider that graph $H \cong K_1 \nabla (K_{n_1} \cup K_{n_2} \cup \dots \cup K_{n_q})$. Then $G$ is a spanning subgraph of $H$. Hence, by Lemma \ref{lem::5} and Lemma \ref{lem::4}, we also have $\rho(G) \le \rho(H) \leq  \rho(K_1 \nabla (K_{n-3} \cup 2K_1)) < n-2$, a contradiction. 
  
  If $1 \le q \le 2$, according to (\ref{equ::5}), we have $d_{G}(x_0) \leq a+1$. Let $T'=V(G)\setminus(S\cup T)$. If the number of components in $G[T']$ is greater than or equal to $2$, we have
  \begin{equation}
  	\begin{aligned}
  		e(G)&=e(S)+e(S,T)+e(S,T')+e(T)+e(T,T')+e(T')\\
  		&=e(T')+e(T,T')\\
  		&\le \frac{(n-2)(n-3)}{2}+a+1.\\
  	\end{aligned}
  	\nonumber
  \end{equation}
  Since $n>a+\frac{5}{2}$, by Lemma \ref{lem::2}, we obtain
  \begin{equation}
  	\begin{aligned}
  		\rho(G)&\le \sqrt{2e(G)-n+1}\\
  		&\le \sqrt{(n-2)(n-3)+2a-n+3}\\
  		&= \sqrt{n^{2}-6n+2a+9}\\
  		&= \sqrt{(n-2)^{2}-2(n-(a+\frac{5}{2}))}\\
  		&< n-2\\
  		&\leq \rho(H_{n,a}),
  	\end{aligned}
  	\nonumber
  \end{equation}
  contrary to our assumption.	Suppose that the number of components in $G[T']$ is equal to $1$, hence $q=1$. In this situation, we claim that $d_{G}(x_0) \leq a-1$. If $d_{G}(x_0)=a$, then we have $a|V(C)|+e_{G}(V(C),T) = an\equiv 1 \pmod 2$, a contradiction. Hence $G$ is a spanning subgraph of $H_{n, a}$, and we induce that $\rho(G)< \rho(H_{n,a})$ because of $G\ncong H_{n,a}$, which is a contradiction. 
  
  If $q=0$, then $d_{G}(x_0) \leq a-1$. Hence $G$ is a spanning subgraph of $H_{n, a}$, $\rho(G)< \rho(H_{n,a})$, which is a contradiction.

{\flushleft {\it Subcase 2.2.} $s \ge 1$.}
 	
  	According to (\ref{equ::5}), we have $d_{G}(x_0) \leq a-1+q-(b-1)s$. 
  	
  	If $q \leq (b-1)s$, we have  $d_{G}(x_0) \leq a-1$. By analyzing similarly to the above, we obtain a contradiction.
  	
  	If $q \geq (b-1)s+1$, by Claim \ref{claim::1}, there are at most $n-2-\frac{1}{2} \sum_{i=1}^{q}|V(C_i)|(n-s-1-|V(C_i)|)$ edges not in $E_G(T,V(G) \backslash (S \cup T))$. Hence,
  	\begin{equation}
  		\begin{aligned}
  			d_{G-S}(x_0)&\geq n-s-1-(n-2-\frac{1}{2} \sum_{i=1}^{q}|V(C_i)|(n-s-1-|V(C_i)|))\\
  			&=1-s+\frac{1}{2} \sum_{i=1}^{q}|V(C_i)|(n-s-1-|V(C_i)|)\\
  			&\geq 1-s+\frac{1}{2}\sum_{i=1}^{q}(n-s-1-|V(C_i)|)\\
  			&=1-s+\frac{1}{2}(q-1)(n-s-1)\\
  			&=\frac{1}{2}(q(n-s-1)-n-s+3).
  		\end{aligned}
        \nonumber
  	\end{equation}
  	Note that $n \ge q+s+1 \ge bs+2 $ and $q\geq (b-1)s+1$. By (\ref{equ::5}), we have
  	\begin{equation}
  		\begin{aligned}
  			\eta(S,T)&=bs-a+d_{G-S}(x_0)-q\\
  			&\geq bs-a+\frac{1}{2}(q(n-s-1)-n-s+3)-q\\
  			&=bs-a+\frac{1}{2}(q(n-s-3)-n-s+3)\\
  			&\ge \frac{1}{2}((b-1)(ns-s^2-s)-2a)\\
  			&\geq \frac{1}{2}((b-1)((b-1)s^2+s)-2a)\\
  			&\ge \frac{1}{2}(b(b-1)-2a)\\
  			&\ge 0,
  		\end{aligned}
  		\nonumber
  	\end{equation}
  	contrary to our assumption.

{\flushleft {\it Case 3.} $t\geq 2$.}

In this situation, we claim that $t \le 2a+2$. By contradiction, suppose that $t \ge 2a+3$. According to (\ref{equ::5}), we have $\eta(S,T)=bs-at+\sum_{x\in T}d_{G-S}(x)-q(S,T)\le-1$. Let $T'=V(G)\setminus(T \cup S)$, we obtain
\begin{equation}
	\begin{aligned}
		e(G)&=e(S)+e(S,T)+e(S,T')+e(T)+e(T,T')+e(T')\\
		&\le \frac{s(s-1)}{2}+st+s(n-s-t)+\sum_{x \in T}d_{G-S}(x)+\frac{(n-s-t)(n-s-t-1)}{2}\\
		&\le \frac{s(s-1)}{2}+st+s(n-s-t)+(at+q-bs-1)+\frac{(n-s-t)(n-s-t-1)}{2}\\
		&= \frac{(n-2)^{2}-n(2t-3)+t^{2}+(2s+1+2a)t+2q-2bs-6}{2}.
		\nonumber
	\end{aligned}
\end{equation}
Since $n \ge s+t$ and $t \ge 2a+3$, by Lemma \ref{lem::2}, we have
\begin{equation}
	\begin{aligned}
		\rho(G)&\le \sqrt{2e(G)-n+1}\\
		&\le  \sqrt{(n-2)^{2}-n(2t-3)+t^{2}+(2s+1+2a)t+2q-2bs-6-n+1}\\
		&\le  \sqrt{(n-2)^{2}-n(2t-3)+t^{2}+(2s+1+2a)t+2(n-s-t)-2bs-n-5}\\
		&=\sqrt{(n-2)^{2}-n(2t-4)+t^{2}+(2s-1+2a)t-(2b+2)s-5}\\
		&\le \sqrt{(n-2)^{2}-(2t-4)(s+t)+t^{2}+(2s-1+2a)t-(2b+2)s-5}\\
		&= \sqrt{(n-2)^{2}-t^{2}+(3+2a)t-(2b-2)s-5}\\
		&\le \sqrt{(n-2)^{2}-(t^{2}-(3+2a)t+5)}\\
		&< n-2,
		\nonumber
	\end{aligned}
\end{equation}
contrary to our assumption.     \qed\vspace{3mm}

Next we consider $q$ with two subcases.
{\flushleft {\it Subcase 3.1.} $q \ge 2$.}

By Claim \ref{claim::1}, there are at most $n-2- \frac{1}{2}\sum_{i=1}^q|V(C_i)|(n-s-t-|V(C_i)|)$ edges not in $E_G(T,V(G) \backslash (S \cup T)) \cup E_G(T)$, and therefore,
\begin{equation}
	\begin{aligned}
		\sum_{x\in T}d_{G-S}(x)&\geq (n-1-s)t-2(n-2- \frac{1}{2}\sum_{i=1}^q|V(C_i)|(n-s-t-|V(C_i)|))\\
		&\ge (n-1-s)t-2n+4+q(n-t-s)-\sum_{i=1}^q|V(C_i)|)\\
		&=(n-1-s)t-2n+4+(q-1)(n-t-s).
	\end{aligned}
    \nonumber
\end{equation}
Hence, by (\ref{equ::5}), we have
\begin{equation}
	\begin{aligned}
		\eta(S,T)&= bs-at+\sum_{x \in T}d_{G-S}(x)-q\\
		&\geq bs-at+(n-1-s)t-2n+4+(q-1)(n-t-s)-q\\
		&= bs-at+(n-1-s)t-2n+3+(q-1)(n-t-s-1)\\
		&\geq bs-at+2(n-1-s)-2n+3+n-s-t-1\\
		&= n+(b-3)s-(a+1)t\\
		&\geq n-(a+1)t\\
		&\geq n-(a+1)(2a+2)\\
		&\geq 0,
		\nonumber
	\end{aligned}
\end{equation}
contrary to our assumption.

{\flushleft {\it Subcase 3.2.} $q \le 1$.}

In this situation, we can obtain $\sum_{x \in T}d_{G-S}(x) \le at-bs$. Hence, $\sum_{x \in T}d_{G}(x) \le at+ts-bs$. If $t<b$, then $\sum_{x \in T}d_{G}(x) < at$. There must be a point $x$ in the graph $G$ belonging to $T$ such that $d_{G}(x) \le a-1$. Thus we can conclude $G$ is a spanning subgraph of $H_{n,a}$. As $G \ncong H_{n,a}$, we have $\rho(G) < \rho(H_{n,a})$, contrary to our assumption. If $t \geq b$, by analyzing similarly to the above, we have $\sum_{x \in T}d_{G-S}(x) \geq (n-1-s)t-2(n-2)$. Hence, by (\ref{equ::5}), 
\begin{equation}
	\begin{aligned}
		\eta(S,T)&= bs-at+\sum_{x \in T}d_{G-S}(x)-q\\
            &\geq bs-at+(n-1-s)t-2n+4-1\\
		&\geq bs-at+(n-1-s)b-2n+3\\
		&= (b-2)n-at-b+3\\
		&\ge n-at-b+3 \\
		&\ge n-a(2a+2)-b+3\\
		&\geq 0,
		\nonumber
	\end{aligned}
\end{equation}
contrary to our assumption.

Therefore, we conclude that $G$ has an $(a,b)$-parity factor, and the result follows. \qed


\begin{thebibliography}{99} 
	
\bibitem{AK}  J. Akiyama, M. Kano, Factors and Factorizations of Graphs: Proof Techniques in Factor Theory, vol. 2031, Springer, 2011.
	

\bibitem{A} R. P. Anstee, Simplified existence theorems for $(g, f)$-factors, Discrete Appl. Math. 27 (1990) 29--38.


\bibitem{BH} A.E. Brouwer, W.H. H$\mathtt{a}$emers, Spectra of Graphs, Springer, Berlin, 2011.

\bibitem{CHO} E. Cho, J. Hyun, S. O, J. Park, Sharp conditions for the existence of an even $[a,b]$-factor in a graph, Bull. Korean Math. Soc. 58 (1) (2021) 31--46.

\bibitem{Co} G. Cornu\'ejols, General factors of graphs, J. Combin. Theory Ser. B 45 (1988) 185--198.





\bibitem{FLL} D. Fan, H. Lin, H. Lu, Spectral radius and $[a, b]$-factors in graphs, Discrete Math. 345 (2022) 112892.

\bibitem{FN} M. Fiedler, V. Nikiforov, Spectral radius and Hamiltonicity of graphs, Linear Algebra Appl. 432 (9) (2010) 2170--2173.



\bibitem{GR} C. Godsil, G. Royle, Algebraic Graph Theory, Graduate Texts in Mathematics, vol. 207, Springer-Verlag, New York, 2001.

\bibitem{Ho} Y. Hong, A bound on the spectral radius of graphs, Linear Algebra Appl. 108 (1988) 135--139.


\bibitem{KT} M.  Kano, N.  Tokushige, Binding numbers and $f$-factors of graphs, J. Combin. Theory Ser. B 54 (1992) 213--221.

\bibitem{KO}  S. Kim, S. O, J. Park, H. Ree, An odd $[1,b]$-factor in regular graphs from eigenvalues, Discrete Math. 343 (8) (2020) 111906.




\bibitem{LL} H. Liu, H. Lu, A degree condition for a graph to have $(a,b)$-parity factors, Discrete Math. 341 (2018) 244--252.

\bibitem{LZ} G. Liu, L. Zhang, Fractional $(g,f)$-factors of graphs, Acta Math. Sci. 21 (4) (2001) 541--545.

\bibitem{LWY} H. Lu, Z. Wu, X. Yang, Eigenvalues and $[1, n]$-odd factors, Linear Algebra Appl. 433 (2010) 750--757.

\bibitem{Lo} L. Lov\'{a}sz, The factorization of graphs. \Rmnum{2}, Acta Math. Sci. Hungar. 23 (1972) 223--246. 

\bibitem{Lo1} L. Lov\'{a}sz, Subgraphs with prescribed valencies, J. Comb. Theory 8 (1970) 391--416.









\bibitem{OS2} S. O, Eigenvalues and $[a,b]$-factors in regular graphs, J. Graph Theory 100 (3) (2022) 458--469.

\bibitem{St}D. Stevanovi\'{c}, Spectral Radius of Graphs, Academic Press, London, 2015.


\bibitem{T} W. T. Tutte, The factors of graphs, Canad. J. Math. 4 (1952) 314--328.

\bibitem{TT} W. T. Tutte, Graph factors, Combinatorica 1 (1981) 79--97.

\bibitem{WXH} B. Wu, E. Xiao, Y. Hong, The spectral radius of trees on $k$ pendant vertices, Linear Algebra Appl. 395 (2005) 343–349.

\bibitem{WZ} J. Wei, S. Zhang, Proof of a conjecture on the spectral radius condition for $[a, b]$-factors, Discrete Math. 346 (2023) 113269. 


\bibitem{Xi} L. Xiong, Characterization of forbidden subgraphs for the existence of even factors in a graph, Discrete Appl. Math. 223 (2017) 135--139.

\bibitem{ZHW} Y. Zhao, X. Huang, Z. Wang, The $A_{\alpha}$-spectral radius and perfect matchings of graphs, Linear Algebra Appl. 631 (2021) 143--155.



\end{thebibliography}
\end{document}